\RequirePackage{fix-cm}
\documentclass[smallextended]{svjour3}
%\documentclass{amsart}
%%%%%%%%%%%%%%%%%%%%%%%%%%%%%%%%%%%%%%%%%%%%%%%%%%%%%%%%%%%%%%%%%%%%%%%%%%%%%%%%%%%%%%%%%%%%%%%%%%%%%%%%%%%%%%%%%%%%%%%%%%%%%%%%%%%%%%%%%%%%%%%%%%%%%%%%%%%%%%%%%%%%%%%%%%%%%%%%%%%%%%%%%%%%%%%%%%%%%%%%%%%%%%%%%%%%%%%%%%%%%%%%%%%%%%%%%%%%%%%%%%%%%%%%%%%%
\usepackage{amssymb}
\usepackage{amsfonts}
\usepackage{amsmath}
\usepackage{pstricks}
\usepackage{graphicx}

\smartqed
%\setcounter{MaxMatrixCols}{10}
%TCIDATA{OutputFilter=LATEX.DLL}
%TCIDATA{Version=5.50.0.2953}
%TCIDATA{<META NAME="SaveForMode" CONTENT="1">}
%TCIDATA{BibliographyScheme=Manual}
%TCIDATA{Created=Thursday, September 05, 2002 12:45:59}
%TCIDATA{LastRevised=Sunday, November 08, 2015 05:02:45}
%TCIDATA{<META NAME="GraphicsSave" CONTENT="32">}
%TCIDATA{<META NAME="DocumentShell" CONTENT="Articles\SW\AMS Journal Article">}
%TCIDATA{Language=American English}
%TCIDATA{CSTFile=amsartci.cst}

%% \newtheorem{theorem}{Theorem}
%% \theoremstyle{plain}
%% \newtheorem{acknowledgement}{Acknowledgement}
%% \newtheorem{algorithm}{Algorithm}
%% \newtheorem{axiom}{Axiom}
%% \newtheorem{case}{Case}
%% \newtheorem{claim}{Claim}
%% \newtheorem{conclusion}{Conclusion}
%% \newtheorem{condition}{Condition}
%% \newtheorem{conjecture}{Conjecture}
%% \newtheorem{corollary}{Corollary}
%% \newtheorem{criterion}{Criterion}
%% \newtheorem{definition}{Definition}
%% \newtheorem{example}{Example}
%% \newtheorem{exercise}{Exercise}
%% \newtheorem{lemma}{Lemma}
%% \newtheorem{notation}{Notation}
%% \newtheorem{problem}{Problem}
%% \newtheorem{proposition}{Proposition}
%% \newtheorem{remark}{Remark}
%% \newtheorem{solution}{Solution}
%% \newtheorem{summary}{Summary}
\renewcommand\bigskip\medskip

\def\F{\mathbb{F}}
\def\N{\mathbb{N}}
\def\Z{\mathbb{Z}}

\def\Q{\mathbb{Q}}
\def\leq{\leqslant}
\def\geq{\geqslant}

%\journalname{The Ramanujan Journal}

\begin{document}

\title{On a two-valued sequence  and related continued fractions
in power series fields }
\author{Bill Allombert, Nicolas Brisebarre and Alain Lasjaunias}

\institute{Bill Allombert \at
Institut de Math\'ematiques de Bordeaux  CNRS-UMR 5251\\
Universit\'e de Bordeaux \\
Talence 33405, France \\
\email{Bill.Allombert@math.u-bordeaux.fr}\\
\and
  Nicolas Brisebarre \at
CNRS, Laboratoire LIP (CNRS, ENSL, Inria, UCBL)\\
Universit\'e de Lyon\\
ENS Lyon, 46 All\'ee d'Italie\\ 
69364 Lyon Cedex 07, France\\
\email{Nicolas.Brisebarre@ens-lyon.fr}\\
\and
Alain Lasjaunias \at
Institut de Math\'ematiques de Bordeaux  CNRS-UMR 5251\\
Universit\'e de Bordeaux \\
Talence 33405, France \\
\email{Alain.Lasjaunias@math.u-bordeaux.fr}
}

%% \begin{tabular}{ll}Bill Allombert and Alain Lasjaunias\\Institut de Math\'ematiques de Bordeaux  CNRS-UMR 5251
%% \\Universit\'e de Bordeaux \\Talence 33405, France \\E-mail: Bill.Allombert@math.u-bordeaux.fr\\
%% \phantom{E-mail: }Alain.Lasjaunias@math.u-bordeaux.fr\\
%% \end{tabular}

%% \medskip

%% \begin{tabular}{ll}Nicolas Brisebarre\\
%% CNRS, Laboratoire LIP (CNRS, ENSL, Inria, UCBL)\\
%% Universit\'e de Lyon\\
%% ENS Lyon, 46 All\'ee d'Italie\\ 
%% 69364 Lyon Cedex 07, France\\
%% E-mail: Nicolas.Brisebarre@ens-lyon.fr\\
%% \end{tabular}

\date{Received: date / Accepted: date}

\maketitle

\begin{abstract}
We explicitly describe a {noteworthy} transcendental continued fraction in the field of power series over $\Q$, having irrationality measure equal to 3. This continued fraction is a generating function of a particular sequence in the set $\lbrace 1,2\rbrace$. The origin of this sequence, whose study was initiated in a recent paper, is to be found in another continued fraction, in the field of power series over $\F_3$, which satisfies a simple algebraic equation of degree 4, introduced thirty years ago by D. Robbins. 
\keywords{formal power series, power series over a finite field, continued fractions, finite automata, automatic sequences, words, finite alphabet}
\subclass{Primary 11J70, 11T55; Secondary 11B85}
\end{abstract}

\section{Introduction}

{In this paper, we present a remarkable continued fraction, built from  an infinite word containing only the letters 1 and 2. The partial study of the structure of this word shows that this sequence is complex enough to imply surprising arithmetical 
properties for the resulting generating function. This generating function is indeed transcendental and it has a very peculiar continued fraction expansion that we will fully describe. We first start by describing our framework and we recall the origin of this strange sequence.}

Here, $K$ is a field which is either the finite field $\mathbb{F}_{q}$, containing $q$ elements and of characteristic $p$, or the field $\Q$ of the rational numbers. Let $T$ be a formal indeterminate. As usual $K[T]$ and $K(T)$ are, respectively, the ring of polynomials and the field of rational functions in $T$ with coefficients in $K$. We consider the field of power series in $1/T$, with coefficients in $K$, denoted by $\mathbb{F}(K)$ or $K((T^{-1}))$. Hence a non-zero element 
of $\mathbb{F}(K)$ can be written as 
\[
\alpha =\sum_{k\leqslant k_{0}}a_{k}T^{k}\quad \textrm{ where} \quad k_{0}\in \Z,\quad a_{k}\in K \textrm{ and}\quad 
a_{k_{0}}\neq 0.
\]
 An ultrametric absolute value is defined over these fields of power series. For $\alpha$ as above we have $|\alpha| =|T|^{k_0}$ where $|T|$ is a fixed real number greater than 1. Note that $\F(K)$ is the completion of the field $K(T)$ for this absolute value.
\par If $K=\mathbb{F}_{q}$, we simply write $\mathbb{F}(q)$ for $\mathbb{F}(K)$. The case $K=\mathbb{F}_{q}$ is particularly important for several reasons. The first one is the link between these fields of power series and certain sequences taking a finite number of values. The fields $\F(q)$ are analogues of the field of real numbers and, as in the case of real numbers, if $\alpha \in \mathbb{F}(q)$ then the sequence of coefficients (or digits) $(a_{k})_{k\leqslant k_{0}}$ for $\alpha $ is ultimately periodic if and only
if $\alpha $ is rational, that is, if $\alpha$ belongs to $\mathbb{F}_{q}(T)$ (note that this fails if $K=\Q$).
However, and this is a distinguishing aspect of the formal case, this sequence of
digits can also be characterized for all elements in $\mathbb{F}(q)$ which
are algebraic over $\mathbb{F}_{q}(T)$; see Theorem 1. Indeed, a large class of sequences, taking values in a finite set, were introduced around the 1960's by computer scientists. These sequences tend to generalize the particularly simple periodic sequences and are called automatic sequences. A full account on this topic and a
very complete list of references are to be found in the book of Allouche and
Shallit \cite{AS}. Automatic sequences are classified by an integer parameter $k\geq 2$, and consequently we have $k$-automatic sequences for all $k\geq 2$. The link between these sequences and power series over a finite field appears in the following theorem
found in the work of Christol \cite{C} (see also the article of
Christol, Kamae, Mend\`{e}s France, and Rauzy \cite{CKMFR}).
\begin{theorem}[Christol]
Let $\alpha $ in $\mathbb{F}(q)$ with $q=p^s$. Let $(a_{k})_{k\leqslant
k_{0}}$ be the sequence of digits of $\alpha$ and $u(n)=a_{-n}$ for all
integers $n\geqslant 0$. Then $\alpha $ is algebraic over $\mathbb{F}_{q}(T)$
if and only if the sequence $(u(n))_{n\geqslant 0}$
is $p$-automatic.
\end{theorem}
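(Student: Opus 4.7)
The plan is to bridge $p$-automaticity of $(u(n))$ and algebraicity of $\alpha$ using the standard \emph{$p$-kernel} characterization, which I take as a black box: a sequence is $p$-automatic if and only if its $p$-kernel
\[
\mathcal{K}_p(u) := \{(u(p^k n + r))_{n \geq 0} : k \geq 0,\ 0 \leq r < p^k\}
\]
is finite. Up to multiplication by a power of $T$, one may assume $\alpha = \sum_{n \geq 0} u(n) T^{-n}$. Since $\mathbb{F}_q$ is perfect, for $r \in \{0, \ldots, p-1\}$ the formula $\Lambda_r(\alpha) := \sum_{n \geq 0} u(pn+r)^{1/p}\, T^{-n}$ defines an additive operator on $\mathbb{F}(q)$, and one has the fundamental identity
\[
\alpha = \sum_{r=0}^{p-1} T^{-r}\, \Lambda_r(\alpha)^p.
\]
An induction on $k$ gives $\Lambda_{r_{k-1}} \cdots \Lambda_{r_0}(\alpha) = \sum_n u(p^k n + r_0 + p r_1 + \cdots + p^{k-1} r_{k-1})^{1/p^k} T^{-n}$; since raising to the $p^k$-th power is a bijection on $\mathbb{F}_q$, the $p$-kernel $\mathcal{K}_p(u)$ is finite if and only if the orbit of $\alpha$ under the semigroup generated by $\Lambda_0, \ldots, \Lambda_{p-1}$ is finite.

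For the direction (automatic $\Rightarrow$ algebraic), assume this orbit is a finite set $\{\alpha = \alpha_1, \ldots, \alpha_N\}$. Closure under each $\Lambda_r$ combined with the fundamental identity yields, for every $i$, a relation $\alpha_i = \sum_{j=1}^N R_{ij}(T)\, \alpha_j^p$ with $R_{ij} \in \mathbb{F}_q[T^{-1}] \subseteq \mathbb{F}_q(T)$. Raising to $p^k$-th powers shows that the subspaces $V_k := \sum_j \mathbb{F}_q(T)\, \alpha_j^{p^k}$ form an ascending chain of dimension at most $N$, which therefore stabilizes. Every $\alpha^{p^k}$ lies in this stabilized space, so $\alpha, \alpha^p, \ldots, \alpha^{p^N}$ are $\mathbb{F}_q(T)$-linearly dependent; the corresponding relation is a nontrivial polynomial equation $\sum_k c_k X^{p^k}$ vanishing at $\alpha$, and $\alpha$ is algebraic over $\mathbb{F}_q(T)$.

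Conversely, assume $\alpha$ algebraic of degree $d$. The $d$-dimensional $\mathbb{F}_q(T)$-vector space $W = \sum_{i<d} \mathbb{F}_q(T)\,\alpha^i$ contains every $\alpha^{p^k}$. Using the minimal polynomial of $\alpha$ to invert the fundamental identity for $\beta \in W$ expresses each $\Lambda_r(\beta)$ as an $\mathbb{F}_q(T)$-rational combination of elements of a fixed, finite-dimensional $\mathbb{F}_q(T)$-space $\widetilde{W}$ containing $W$; after clearing a common $\mathbb{F}_q[T]$-denominator, a valuation argument exploiting the fact that $\Lambda_r$ roughly divides $T$-valuations by $p$ uniformly bounds the $T$-degrees of the coefficients that can appear under iteration of the $\Lambda_r$. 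This confines the $\Lambda$-orbit of $\alpha$ to a finite-dimensional $\mathbb{F}_q$-vector space, hence to a finite set, so $\mathcal{K}_p(u)$ is finite and $(u(n))$ is $p$-automatic.

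The core difficulty, and the essential content of Christol's theorem, lies in this second direction: upgrading finite-dimensionality over $\mathbb{F}_q(T)$ to genuine finiteness over $\mathbb{F}_q$. This requires the uniform denominator-and-degree control on iterates of the $\Lambda_r$ sketched above. The first direction is, by contrast, essentially linear algebra once the decomposition identity and the $p$-kernel characterization are in place.
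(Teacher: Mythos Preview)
The paper does not prove Theorem~1; it is quoted as Christol's theorem with references to \cite{C} and \cite{CKMFR}, and is used only as a black box (to deduce transcendence of $\theta$ over $\mathbb{F}_p(T)$ from non-automaticity of $W$). There is therefore no ``paper's own proof'' to compare your proposal against.

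As to the substance of your sketch: the approach via Cartier/section operators and the $p$-kernel characterization is the standard one, and your argument for the direction (automatic $\Rightarrow$ algebraic) is essentially complete and correct---the ascending chain $V_0\subseteq V_1\subseteq\cdots$ of $\mathbb{F}_q(T)$-spaces of bounded dimension does force an Ore-type dependence among the $\alpha^{p^k}$. The converse, however, is not yet a proof. The passage ``using the minimal polynomial of $\alpha$ to invert the fundamental identity \ldots\ expresses each $\Lambda_r(\beta)$ as an $\mathbb{F}_q(T)$-rational combination of elements of a fixed $\widetilde W$'' is the crux and is not justified: the $\Lambda_r$ are only Frobenius-semilinear ($\Lambda_r(f^p g)=f\,\Lambda_r(g)$), not $\mathbb{F}_q(T)$-linear, so stability of a finite-dimensional $\mathbb{F}_q(T)$-space under them is not automatic, and the subsequent ``valuation argument'' bounding degrees under iteration is asserted rather than carried out. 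The clean way through is to first use Ore's theorem to obtain a relation $\sum_{i=0}^m c_i\,\alpha^{p^i}=0$ with $c_i\in\mathbb{F}_q[T]$ and $c_0\neq 0$, and then exhibit an explicit finite $\mathbb{F}_q$-set (for instance $\{\,\beta:\ c_0\beta\in\sum_i \mathbb{F}_q[T]_{\le D}\,\alpha^{p^i}\,\}$ for a suitable degree bound $D$) that contains $\alpha$ and is closed under all $\Lambda_r$; the semilinearity identity then does the work. You correctly identify this as the hard direction, but the details are what make it a theorem.
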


\par In the fields of power series over $K$, as for real numbers, there is a continued fraction algorithm. The integers are replaced by elements of $K[T]$. The reader may consult \cite{L1} for a general account concerning continued fractions and Diophantine approximation in power series fields. We know that any irrational element $\alpha $ in $\mathbb{F}(K)$ can be expanded as an infinite continued fraction where the partial
quotients $a_{n}$ are polynomials in $K[T]$, all of positive
degree, except perhaps for the first one. Traditionally, this expansion is denoted by $\alpha =[a_0,a_{1},a_{2},\ldots ,a_{n},\ldots ]$. As for real numbers, this expansion is fundamental to measure the quality of the rational approximation of formal power series. The irrationality measure of an irrational power series $\alpha\in \F(K)$ is defined by
\[
\nu(\alpha)=-\limsup_{\vert Q \vert \to \infty}\log(\vert \alpha -P/Q \vert)/\log(\vert Q \vert),
\]
where $P,Q\in K[T]$. This irrationality measure is directly related to the growth of the sequence of the degrees of the partial quotients $a_n$ in the continued fraction expansion of $\alpha$. Indeed we have (see, for instance, \cite[p. 214]{L1})
\[
\nu(\alpha)=2+\limsup_{n\geq 1}(\deg(a_{n+1})/\sum_{1\leq i\leq n}\deg(a_i)).
\]
Here again the difference between $K$ finite and $K=\Q$ is striking. We recall that, by an adaptation of a classical theorem of Liouville, due to Mahler, if $\alpha\in \F(K)$ is algebraic over $K(T)$, such that $[K(T,\alpha):K(T)]=n$ then we have $\nu(\alpha)\in[2,n]$. For algebraic real numbers this irrationality measure is known to be equal to 2 by the celebrated Roth theorem \cite{R}. It was proved by Uchiyama \cite{U}, shortly after Roth theorem, that the same is true for fields of power series over $K$, if $K$ has characteristic zero. In the case $K=\mathbb{F}_{q}$, the situation is more complex, and Mahler \cite{M} observed that we may have $\nu(\alpha)=n$ for certain algebraic elements of degree $n$. This observation was the beginning of the study of Diophantine approximation in positive characteristic. At the same time the study of continued fractions for certain algebraic power series over finite fields was developed.

\par The starting point of this note is a particular continued fraction in the field $\F(3)$. It was introduced by Mills and Robbins \cite{MR}, and it is the root of the following quartic equation: $x^4+x^2-Tx+1=0$. This equation has a unique root in $\F(p)$ for all $p\geq 2$. The continued fraction expansion for this root $\alpha$ in $\F(3)$, calculated by computer,  was conjectured in \cite{MR}. Some ten years later this conjecture was proved in \cite{BR} and also, shortly afterwards, with a different method in \cite{L2}. Here we have $\alpha=[0,a_1,a_2,\dots,a_n,\dots]$ where $a_n=\lambda_nT^{u_n}$, $(\lambda_n)_{n\geq 1}$ is a sequence in $\F_3^*$ and $(u_n)_{n\geq 1}$ another sequence in $\N^*$. The knowledge of this last sequence in $\N^*$ implies that here we have $\nu(\alpha)=2$, even though the sequence of the degrees of the partial quotients ($u_n$) is unbounded. This implies a remarkable property of this element (see  \cite[p. 209]{L2} and also \cite[p. 58]{LY}). In this note, we only are interested in the first sequence $(\lambda_n)_{n\geq 1}$ in $\F_3^*$, i.e., in $\lbrace 1,2\rbrace$. This sequence is described in the following theorem.   
\begin{theorem}
Let $(W_{n})_{n\geqslant 0}$ be the sequence of finite words over the
alphabet $\{{1,2\}}$, defined recursively as follows:
\[
W_{0}=\epsilon ,\textrm{ }W_{1}=1,\quad \textrm{ and }\quad 
W_{n}=W_{n-1},2,W_{n-2},2,W_{n-1},\quad \textrm{ for }\quad n\geqslant 2,
\]
where commas indicate concatenation of words and $\epsilon$ denotes the empty word. Let $W=(w(n))_{n\geqslant 1}$ be the infinite sequence beginning with $W_{n}$ for all $n\geq 0$. Then the sequence $W=(w(n))_{n\geqslant 1}$ is substitutive but it is not $k$-automatic for any integer $k\geq 2$. 
\end{theorem}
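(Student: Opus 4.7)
The statement splits into two independent assertions, and I treat them separately. For the substitutive part I exhibit an explicit morphism that generates $W$ under a coding; for the non-automatic part I compute the natural density of the letter $2$ in $W$, show it is irrational, and invoke the classical fact that letter densities in $k$-automatic sequences are rational whenever they exist.

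For substitutivity, let $\Sigma=\{a,b,c\}$ and define the morphism $\sigma\colon \Sigma^{*}\to \Sigma^{*}$ by
\[
\sigma(a)=abca,\qquad \sigma(b)=b,\qquad \sigma(c)=ab,
\]
together with the letter-to-letter coding $\tau\colon a\mapsto 1,\ b\mapsto 2,\ c\mapsto 2$. Since $\sigma(a)$ begins with $a$, the fixed point $\sigma^{\omega}(a)=\lim_{n\to\infty}\sigma^{n}(a)$ is well defined. A short joint induction on $n$ gives the two identities
\[
\sigma^{n+1}(a)=\sigma^{n}(a)\,b\,\sigma^{n}(c)\,\sigma^{n}(a),\qquad \sigma^{n+1}(c)=\sigma^{n}(a)\,b.
\]
Plugging the second identity (at level $n-1$) into the first yields, for $n\geq 1$, the relation $\sigma^{n+1}(a)=\sigma^{n}(a)\,b\,\sigma^{n-1}(a)\,b\,\sigma^{n}(a)$, which after applying $\tau$ is precisely the defining recurrence $W_{n+2}=W_{n+1}\,2\,W_{n}\,2\,W_{n+1}$. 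Together with the base cases $\tau(\sigma^{0}(a))=1=W_{1}$ and $\tau(\sigma(a))=1221=W_{2}$, this forces $\tau(\sigma^{n}(a))=W_{n+1}$ for every $n\geq 0$, so $\tau(\sigma^{\omega}(a))=W$ and $W$ is substitutive.

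For the non-automatic assertion, set $\ell_{n}=|W_{n}|$ and $b_{n}=|W_{n}|_{2}$ (the number of $2$'s in $W_{n}$). The recurrence for $(W_{n})$ gives
\[
\ell_{n}=2\ell_{n-1}+\ell_{n-2}+2,\qquad b_{n}=2b_{n-1}+b_{n-2}+2,
\]
two linear recurrences with common characteristic polynomial $x^{2}-2x-1$ (roots $1\pm\sqrt{2}$) and common particular solution $-1$. Solving explicitly with $\ell_{0}=0,\ \ell_{1}=1,\ b_{0}=b_{1}=0$ yields $b_{n}/\ell_{n}\to 2-\sqrt{2}$. A routine squeezing argument, using the self-similarity $W_{n+1}=W_{n}\,2\,W_{n-1}\,2\,W_{n}$ and the exponential growth $\ell_{n+1}/\ell_{n}\to 1+\sqrt{2}$ to control the residual tail when $\ell_{n}\leq N<\ell_{n+1}$, promotes this to show that the natural density of the letter $2$ along $W$ exists and equals $2-\sqrt{2}$. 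Since $2-\sqrt{2}$ is irrational, but a classical theorem of Cobham guarantees that the natural density of any letter in a $k$-automatic sequence is rational whenever it exists, $W$ is not $k$-automatic for any $k\geq 2$.

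The creative step is the choice of the morphism $\sigma$: the auxiliary letter $c$, never appearing in the target alphabet $\{1,2\}$, is precisely what keeps $\tau$ letter-to-letter (non-erasing). The only mildly delicate point is the density promotion from $N=\ell_{n}$ to arbitrary $N$, which needs a uniform bound on the number of $2$'s in the residual tail; everything else is a direct linear-algebra computation or a mechanical induction.
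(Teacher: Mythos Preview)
Your argument is correct on both halves, and it is worth noting at the outset that the paper does not actually prove this theorem: immediately after the statement it writes ``This theorem was proved in \cite{LY}'' and refers the reader there. So there is no in-paper proof to compare against line by line; what you have supplied is a self-contained proof where the paper offers only a citation.

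On the substitutive part, your morphism $\sigma(a)=abca$, $\sigma(b)=b$, $\sigma(c)=ab$ with the coding $\tau\colon a\mapsto 1,\ b,c\mapsto 2$ is exactly right. The identities $\sigma^{n+1}(a)=\sigma^{n}(a)\,b\,\sigma^{n}(c)\,\sigma^{n}(a)$ and $\sigma^{n+1}(c)=\sigma^{n}(a)\,b$ are immediate from the definition of $\sigma$ (since $\sigma^{n}(b)=b$), and combining them reproduces the defining recursion of $(W_{n})$ after applying $\tau$. The base cases check ($\tau(a)=1=W_{1}$, $\tau(abca)=1221=W_{2}$), so $\tau(\sigma^{\omega}(a))=W$ and $W$ is substitutive. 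Introducing the auxiliary letter $c$ to keep the coding non-erasing is indeed the one creative move, and it works cleanly.

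On the non-automaticity part, your computation of the limiting density $b_{n}/\ell_{n}\to 2-\sqrt{2}$ is correct: both sequences satisfy $x_{n}=2x_{n-1}+x_{n-2}+2$ with particular solution $-1$, and the dominant coefficients are $1/2$ and $(2+\sqrt{2})/4$ respectively, giving the ratio $2/(2+\sqrt{2})=2-\sqrt{2}$. Invoking Cobham's theorem that letter frequencies in $k$-automatic sequences are rational whenever they exist then finishes the argument. One remark that slightly strengthens your write-up: Cobham's theorem in fact guarantees that for a $k$-automatic sequence the letter frequencies \emph{always} exist (and are rational). Hence you do not strictly need the ``routine squeezing'' passage from $N=\ell_{n}$ to arbitrary $N$; it suffices to say that if $W$ were $k$-automatic then the frequency of $2$ would exist and be rational, whereas along the subsequence $N=\ell_{n}$ the partial frequencies converge to the irrational number $2-\sqrt{2}$. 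This removes the one step you flagged as ``mildly delicate''. That said, the squeezing does go through as well, using the block decomposition $W_{n+1}=W_{n}\,2\,W_{n-1}\,2\,W_{n}$ and the bounded ratio $\ell_{n+1}/\ell_{n}\to 1+\sqrt{2}$; one finds that $|c(N)-(2-\sqrt{2})N|$ stays bounded, so the full density exists and equals $2-\sqrt{2}$.

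In summary: your proof is sound and complete, and since the paper outsources this result to \cite{LY}, your contribution here is genuinely additive rather than merely reproducing the paper's reasoning.
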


 This theorem was proved in  \cite[ Theorem 4]{LY}. The reader can consult \cite{LY} for a brief account of automatic and substitutive sequences, and also naturally \cite{AS} for a full exposition on these matters. 
\par We set $\theta=\sum_{n\geq 1}w(n)T^{-n}$. Then $\theta$ can be considered in $\mathbb{F}(K)$, where $K=\F_p$, for any prime number $p\geq 2$ or $K=\Q$, and $\theta$ is transcendental over $K(T)$ in all cases.
In the case $K=\F_p$, this is simply a straightforward consequence of Theorem 1. Indeed if $\theta$ were algebraic over $\F_p(T)$, then the sequence $(w(n))_{n\geqslant 1}$ would be $p$-automatic, in contradiction with Theorem 2. In the case, $K=\Q$, by Uchiyama's adaptation of Roth's theorem, it will simply follow from the fact that the irrationality measure for $\theta$ is equal to 3.  This will be established in the next section, {as a consequence of our main result Theorem 3, in which we explicitly give the degrees of the partial quotients of the continued fraction expansion for $\theta$ when considered in the field $\Q((T^{-1}))$. We will end our text with a conjectural description of this continued fraction expansion.}

 % where we shall
% explicitly give the continued fraction expansion for $\theta$ when considered in the field $\Q((T^{-1}))$. 

\section{A transcendental continued fraction in $\Q((T^{-1}))$}

\par We start from the sequence $W=(w(n))_{n\geqslant 1}$, defined above in Theorem 2, and  we consider the formal power series $\theta=\sum_{n\geq 1}w(n)T^{-n}\in \Q((T^{-1}))$. We shall see that this power series is not rational and we shall describe partially the infinite sequence of the partial quotients in the theorem below. 
\par First we define some notions needed for a deep study of the sequence $(W_{n})_{n\geqslant 0}$ of finite words over the alphabet $\{{1,2\}}$. If $M$ is a finite word, we let $|M|$ denote the length of this word. This notation on words should not be confused with the use of the absolute value for elements in the power series fields. Hence we have  $|W_0|=0$ and $|W_1| =1$. We set $\ell_n=|W_n|$. By the recursive definition of $(W_{n})_{n\geqslant 0}$, for $n\geq 2$, we have $\ell_n=2\ell_{n-1}+\ell_{n-2}+2$. We will also use the following: if $M=m_1,m_2,\dots,m_n$ is a finite word we set $\phi(M)=m_1T^{n-1}+m_2T^{n-2}+\dots+m_n \in \Q[T]$ and also $\Phi(M)=\phi(M)T^{-|M|} \in \Q(T)$. By extension, for an infinite word, with our definitions of $W$ and $\theta$, we can simply write $\theta=\Phi(W)$. We have the following theorem which is our main result.   

\begin{theorem}
  Let $\theta=\sum_{n\geq 1}w(n)T^{-n}\in \Q((T^{-1}))$, then $\theta$ is irrational and we have the infinite continued fraction expansion $\theta=[0,a_1,a_2,\dots,a_n,\dots]$ with $a_n\in \Q[T]$ for $n\geq 1$. Let $(\ell_n)_{n\geq 0}$ be the sequence of integers defined by
\[
\ell_0=0,\quad \ell_1=1\quad \textrm{ and}\quad \ell_{n+1}=2\ell_n+\ell_{n-1}+2 \quad \textrm{ for}\quad n\geq 1.
\]
Indeed, we have $\ell_n=((2+\sqrt{2})(1+\sqrt{2})^n+(2-\sqrt{2})(1-\sqrt{2})^n)/4-1.$ 
 \newline Let $(d_n)_{n\geq 1}$ be the sequence of integers defined by $d_n=\deg(a_n)$ for $n\geq 1$. Then this sequence is described as follows:
$d_1=d_2=d_3=d_4=1$ and, for $n\geq 1$, we have 
\[
d_{4n+1}=(3\ell_n+\ell_{n-1}+1)/2, \qquad d_{4n+3}=(\ell_n+\ell_{n-1}+1)/2
\]
and
\[
d_{4n+2}=1, \qquad d_{4n+4}=1.
\]
\end{theorem}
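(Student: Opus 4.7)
The plan is to prove Theorem~3 by induction on $n$, establishing the degrees of the partial quotients in blocks of four: $a_{4n+1},\ldots,a_{4n+4}$. The irrationality of $\theta$ is a quick preliminary: if $\theta\in\Q(T)$, then $(w(n))$ would satisfy a $\Q$-linear recurrence, and since it takes only the values $1$ and $2$, this would force eventual periodicity; but an eventually periodic sequence is $k$-automatic for every $k\geq2$, contradicting Theorem~2. The closed form for $\ell_n$ is routine linear-recurrence work.

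The key structural inputs are threefold. First, the concatenation $W_{n+1}=W_n,2,W_{n-1},2,W_n$ gives the polynomial identity
\[
\phi(W_{n+1})=\phi(W_n)\bigl(T^{\ell_{n+1}-\ell_n}+1\bigr)+T^{\ell_n+1}\phi(W_{n-1})+2T^{\ell_n}\bigl(T^{\ell_{n-1}+1}+1\bigr).
\]
Second, writing $\tau_n$ for the infinite suffix of $W$ after the prefix $W_n$, the recursion $\tau_n=2,W_{n-1},2,W_n,\tau_{n+1}$ yields the error formula
\[
\theta-\Phi(W_n)=T^{-\ell_n-1}\bigl(2+\Phi(W_{n-1})\bigr)+T^{-\ell_n-\ell_{n-1}-2}\bigl(2+\Phi(W_n)\bigr)+\bigl(\theta-\Phi(W_{n+1})\bigr).
\]
Third, each $W_n$ is a palindrome (by immediate induction from the recursion), so $\phi(W_n)$ is a self-reciprocal polynomial of degree $\ell_n-1$. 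These three ingredients drive the induction: at each step one first identifies an explicit rational function $p_{4n}/q_{4n}$ that is the $(4n)$-th convergent of $\theta$, with $\deg(q_{4n})=(\ell_{n+1}+\ell_n+3)/2$; one then applies the power-series Euclidean algorithm to $q_{4n}\theta-p_{4n}$, using the error formula to control the sizes of the successive remainders, and produces partial quotients of degrees $(3\ell_n+\ell_{n-1}+1)/2$, $1$, $(\ell_n+\ell_{n-1}+1)/2$, and $1$. The base case $n=0$ is handled by a direct computation of the continued fraction of the rational function $\Phi(W_3)$, which has exactly four partial quotients, all of degree~$1$.

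The main obstacle is to pin down the precise form of $p_{4n}/q_{4n}$ as an explicit $\Q(T)$-rational expression. The naive truncation $\Phi(W_n)=\phi(W_n)/T^{\ell_n}$ fails to be a convergent for any $n\geq1$, since $|\theta-\Phi(W_n)|=|T|^{-\ell_n-1}$ violates the convergent bound $|\theta-P/Q|<1/|Q|^2$; the correct convergent must be a combination of $\phi(W_n)$, $\phi(W_{n-1})$ and low-degree polynomials in which the palindromic symmetry of $\phi(W_n)$ enforces a common factor that reduces the effective denominator degree from the ``natural'' $\ell_n$ or $\ell_{n+1}$ down to the claimed $D_{4n}=(\ell_{n+1}+\ell_n+3)/2$. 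The four-step block structure --- two large partial quotients of sizes roughly $D_{4n}$ and $D_{4n}/2$ separated by degree-$1$ spacers --- is reminiscent of, but more intricate than, the classical Folding Lemma for continued fractions, and tracking it through the recursion without loss of algebraic precision is where the technical work lies.
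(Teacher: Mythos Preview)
Your proposal is a plan, not a proof, and you yourself identify the hole: you never actually construct the convergent $p_{4n}/q_{4n}$, and you do not prove it is in lowest terms. That is not a side issue; it is the heart of the theorem. Without an explicit reduced fraction and a coprimality argument, you cannot read off $\deg(q_{4n})$, you cannot compute $|q_{4n}\theta-p_{4n}|$, and you therefore cannot run even the first of your four Euclidean steps. Your suggestion that the palindromicity of $W_n$ (hence self-reciprocity of $\phi(W_n)$) will force a common factor cutting the denominator degree from $\ell_{n+1}$ down to $(\ell_{n+1}+\ell_n+3)/2$ is not justified: self-reciprocity gives factors such as $T\pm1$, not a factor of degree roughly $\ell_{n+1}/2$, and no mechanism is offered for why the numerator should share such a factor.

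The paper closes exactly this gap, but along a different line from the one you sketch. Rather than correcting the truncation $\Phi(W_n)$, it builds \emph{two} interleaved families of rationals by periodizing initial segments of $W$: from the eventually periodic word $U_n(V_n)^\infty$ (with $U_n=W_n2W_{n-1}$, $V_n=2W_n$) one gets $R_n/S_n$ with $S_n=T^{(\ell_n+\ell_{n-1}+3)/2}(T^{\ell_n+1}-1)$, and from the purely periodic word $(W_{n+1}2W_n2)^\infty$ one gets $R'_n/S'_n$ with $S'_n=T^{3\ell_n+\ell_{n-1}+4}-1$. A careful word-combinatorics computation of where $W$ first diverges from each periodic word yields approximation exponents $\omega_n,\omega'_n>2$, so both are convergents. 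Coprimality is established not via palindromes but via the determinant identity $R_nS'_n-R'_nS_n=(-1)^n(T-1)$ together with $R_n(1)\neq0$, $R'_n(1)\neq0$. With both families known to be reduced convergents, comparing $\deg S_n<\deg S'_n<\deg S_{n+1}$ against the sizes $d_{N(n)+1}=(\omega_n-2)\deg S_n$ and $d_{M(n)+1}=(\omega'_n-2)\deg S'_n$ forces exactly two partial quotients between each consecutive pair, pinning down all four degrees at once; no inductive Euclidean unwinding is needed. Your irrationality argument via Theorem~2 is valid but circuitous; once the approximations $R_n/S_n$ with $\omega_n\to3$ are in hand, irrationality is immediate.
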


\begin{corollary}
The irrationality measure of $\theta$ is equal to 3.
\end{corollary}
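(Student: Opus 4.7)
The plan is to invoke the identity
\[
\nu(\theta) = 2 + \limsup_{n \geq 1} \frac{d_{n+1}}{\sum_{1 \leq i \leq n} d_i}
\]
recalled in the introduction (with $d_i=\deg(a_i)$), and to evaluate the limsup by plugging in the explicit values of $d_n$ furnished by Theorem 3. Writing $S_n=\sum_{i=1}^n d_i$, the task reduces to showing that $\limsup_n d_{n+1}/S_n = 1$.

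First I would dispose of two of the four residues modulo $4$: since $d_{4m+2}=d_{4m+4}=1$ for $m\geq 1$ and $S_n\to\infty$, the ratios $d_{n+1}/S_n$ corresponding to $n\equiv 1$ or $3\pmod 4$ tend to $0$ and contribute nothing to the limsup. Only the subsequences indexed by $n=4m$ (with numerator $d_{4m+1}$) and $n=4m+2$ (with numerator $d_{4m+3}$) can matter. To compute the denominators, I would first establish the clean block identity
\[
d_{4k+1}+d_{4k+2}+d_{4k+3}+d_{4k+4}=\ell_{k+1}+1,
\]
which is a one-line consequence of Theorem~3 and the recurrence $\ell_{k+1}=2\ell_k+\ell_{k-1}+2$. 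Summing yields $S_{4n}=n+3+\sum_{k=2}^n\ell_k$ for $n\geq 1$, and then $S_{4n+2}=S_{4n}+d_{4n+1}+1$.

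For the asymptotics, I would use the closed form of Theorem~3, which gives $\ell_n\sim \frac{2+\sqrt 2}{4}\phi^n$ with $\phi=1+\sqrt 2$ (so $1/\phi=\sqrt 2 -1$ and $\phi-1=\sqrt 2$). The geometric sum $\sum_{k=2}^n\ell_k$ dominates the linear term in $S_{4n}$ and satisfies $\sum_{k=2}^n\ell_k/\ell_n\to \phi/(\phi-1)=(2+\sqrt 2)/2$; meanwhile $d_{4n+1}/\ell_n \to (3+1/\phi)/2 = (2+\sqrt 2)/2$. The two limits coincide, so $d_{4n+1}/S_{4n}\to 1$. An analogous computation gives $d_{4n+3}/\ell_n\to\sqrt 2/2$ and $S_{4n+2}/\ell_n\to 2+\sqrt 2$, whence $d_{4n+3}/S_{4n+2}\to(\sqrt 2-1)/2<1$. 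Combining the four cases, the limsup equals $1$ and $\nu(\theta)=3$.

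The substantive step is the exact cancellation $d_{4n+1}/S_{4n}\to 1$: any error in the leading coefficients of the two asymptotics would produce a wrong (and generally irrational) value of $\nu(\theta)$. Once one notices that both quantities are geometric series in $\phi^n$ with the same leading constant $\frac{(2+\sqrt 2)^2}{8}\phi^n$, the identification is forced, and the remainder of the argument is purely mechanical.
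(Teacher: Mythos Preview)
Your proof is correct. Both you and the paper start from the same identity $\nu(\theta)=2+\limsup_n d_{n+1}/S_n$ and reduce to the subsequence $n=4m$; the difference is in how the ratio $d_{4n+1}/S_{4n}$ is shown to tend to~$1$. You compute both numerator and denominator asymptotically via the closed form $\ell_n\sim\frac{2+\sqrt 2}{4}(1+\sqrt 2)^n$ and check that the leading constants agree. The paper instead proves the \emph{exact} identity $S_{4n}=2+d_{4n+1}$ by induction, from which the limit is immediate. In fact your block identity $d_{4k+1}+d_{4k+2}+d_{4k+3}+d_{4k+4}=\ell_{k+1}+1$ is precisely the inductive step of the paper's identity: combining it with the recurrence $\ell_{k+1}=2\ell_k+\ell_{k-1}+2$ one checks directly that $S_{4(n+1)}-S_{4n}=\ell_{n+1}+1=d_{4(n+1)+1}-d_{4n+1}$, so $S_{4n}-d_{4n+1}$ is constant, equal to $S_4-d_5=4-2=2$. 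Had you noticed this, the asymptotic computation (and the separate treatment of the subsequence $n=4m+2$) would have been unnecessary. Your route is a bit longer but perfectly valid, and it has the minor advantage of also exhibiting the value of the second-largest subsequential limit $(\sqrt 2-1)/2$.
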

%    {\bf{Proof of the corollary:}}
\begin{proof} We know that the irrationality measure of $\theta$ satisfies
\[
\nu(\theta)=2+\limsup_{n\geq 1}(d_{n+1}/\sum_{1\leq i\leq n}d_i).
\]
We observe that, for $n\geq 1$, we have $d_{4n+1}>\max(d_{4n},d_{4n-1},d_{4n-2},d_{4n-3})$ and consequently, we have $\nu(\theta)=2+\lim_{n\geq 1}(d_{4n+1}/\sum_{1\leq i\leq 4n}d_i)$. We set $t_n= \sum_{1\leq i\leq 4n}d_i$. We have $t_1=4$ and by induction, we obtain easily $t_n=2+d_{4n+1}$ for $n\geq 1$. Hence, we have $\nu(\theta)=2+\lim_{n\geq 1}(d_{4n+1}/(2+d_{4n+1}))=3$.\qed
\end{proof}  
\par The proof of Theorem 3 is obtained using three lemmas. Each of the first two lemmas  gives a sequence of good rational approximations to $\theta$. 

In the sequel, we shall avoid the comma for the concatenation of words.
\begin{lemma}
For $n\geq 1$, we set $S_n=T^{(\ell_n+\ell_{n-1}+3)/2}(T^{\ell_n+1}-1)$. Then, for $n\geq 1$, there exists $R_n\in \Q[T]$ such that
\[
|\theta-R_n/S_n|=|S_n|^{-\omega_n}\quad \textrm{ with } \quad \omega_n=3-4/(3\ell_n+\ell_{n-1}+5).
\]
\end{lemma}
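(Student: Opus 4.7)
The plan is to construct $R_n$ by prescribing the first $D_n := \deg S_n = (3\ell_n + \ell_{n-1} + 5)/2$ coefficients of its power-series expansion $R_n/S_n = \sum_{k \geq 1} c_k T^{-k}$. Since $S_n = T^{D_n} - T^{D_n - \ell_n - 1}$, writing out $S_n \cdot (R_n/S_n) = R_n$ degree by degree forces the recurrence $c_k = c_{k - \ell_n - 1}$ for every $k > D_n$, and conversely any choice of $c_1, \ldots, c_{D_n}$ yields a well-defined $R_n \in \Q[T]$ of degree less than $D_n$. I take $c_k = w(k)$ for $1 \leq k \leq D_n$, so $(c_k)$ matches the coefficients of $\theta$ on the first $D_n$ positions by construction and from then on is periodic of period $\ell_n + 1$. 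Writing $M_n := (9\ell_n + 3\ell_{n-1} + 9)/2$, the lemma will follow once I show $c_k = w(k)$ for every $k \leq M_n$ while $c_{M_n + 1} \neq w(M_n + 1)$: this gives $|\theta - R_n/S_n| = |T|^{-(M_n + 1)}$, and the identity $M_n + 1 = \omega_n D_n$ is immediate from the definition of $\omega_n$, yielding $|T|^{-(M_n + 1)} = |S_n|^{-\omega_n}$.

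This reduces the lemma to two combinatorial claims about the word $W$: (C1) $w(j) = w(j + \ell_n + 1)$ for every integer $j$ with $(\ell_n + \ell_{n-1} + 5)/2 \leq j \leq (7\ell_n + 3\ell_{n-1} + 7)/2$; and (C2) $w((7\ell_n + 3\ell_{n-1} + 9)/2) \neq w((9\ell_n + 3\ell_{n-1} + 11)/2)$. To prove them I exploit the expansion
\[
W_{n+2} = W_n \, 2 \, W_{n-1} \, 2 \, W_n \, 2 \, W_n \, 2 \, W_n \, 2 \, W_{n-1} \, 2 \, W_n,
\]
obtained by applying the recursion twice. Since every position appearing in (C1) and (C2) is at most $M_n + 1 \leq \ell_{n+2}$, it lies inside one of the seven sub-blocks of this decomposition of the prefix $W_{n+2}$ of $W$. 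For (C1), a case analysis on which sub-block each of $j$ and $j + \ell_n + 1$ falls into reduces every required equality either to two separators $2$ being equal, to an equality of letters at the same position within a common nested $W_k$ (via the recursion $W_n = W_{n-1} \, 2 \, W_{n-2} \, 2 \, W_{n-1}$), or to the trivial fact that all $W_k$ for $k \geq 1$ start with the same letter $1$.

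The crux is (C2). For $n = 1$ a direct inspection gives $w(8) = 1 \neq 2 = w(10)$. For $n \geq 2$, both positions land inside copies of $W_n$ in the decomposition of $W_{n+2}$, and the claim becomes that the letters of $W_n$ at positions $(\ell_n - \ell_{n-1} - 1)/2$ and $(\ell_n + \ell_{n-1} + 1)/2$ differ. Setting $a_n = (\ell_n - \ell_{n-1} - 1)/2$, a short computation gives $a_n - a_{n-1} = \ell_{n-2} + 1$; moreover, for $n \geq 3$ the inequalities $a_n \leq \ell_{n-1}$ and $a_n + \ell_{n-1} + 1 > \ell_n - \ell_{n-1}$ place the two positions respectively in the leading and trailing copies of $W_{n-1}$ inside $W_n$, so that the claim at level $n$ reduces to the same claim at level $n - 1$. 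An induction with base case $n = 2$ (immediate from $W_2 = 1221$) finishes the argument.
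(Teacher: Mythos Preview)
Your construction of $R_n$ is the same as the paper's (the paper writes it as $\Phi(U_nV_n^\infty)$ with $U_n=W_n2W_{n-1}$, $V_n=2W_n$, then cancels a factor $T^{|F_n|}$; you recover the same polynomial directly from the coefficient recurrence forced by $S_n$). Your reduction to the two combinatorial claims (C1) and (C2) is correct, and the numerics ($M_n+1=\omega_nD_n$, the endpoints of the range in (C1), the two positions in (C2)) all check out. Your inductive proof of (C2) is clean and is in fact more self-contained than the paper's treatment, which introduces auxiliary words $I_n$ and proves $I_n\bullet\bullet I_{n-1}$.

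The genuine gap is in (C1). The range you need is $j\in[(\ell_n+\ell_{n-1}+5)/2,\,(7\ell_n+3\ell_{n-1}+7)/2]$. The overt periodicity of the block $2W_n2W_n2W_n$ inside your seven-block decomposition handles only $j\in[\ell_n+\ell_{n-1}+2,\,3\ell_n+\ell_{n-1}+3]$. For $n\geq 2$ this leaves a lower tail of length $(\ell_n+\ell_{n-1}-1)/2$ and an upper tail of length $(\ell_n+\ell_{n-1}+1)/2$ uncovered. In these tails the required equality becomes ``letter $p$ of $W_n$ equals letter $p-\ell_{n-1}-1$ of $W_n$'' for $p$ in a certain range, and this is \emph{not} an instance of any of your three reduction types: unwinding $W_n=W_{n-1}2W_{n-2}2W_{n-1}$ once sends the two positions to \emph{different} offsets inside copies of $W_{n-1}$ (differing by $\ell_{n-2}+1$), not the same offset. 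What is actually required is a further induction on $n$, of exactly the same shape as the one you carry out for (C2). The paper handles this by proving that $U_n$ and $V_n$ share a common suffix $F_n$ of length $(\ell_n+\ell_{n-1}-1)/2$ (covering the lower tail) and that $V_{n-1}V_n$ and $V_n$ share a common prefix $2J_n$ of that same length plus one (covering the upper tail), both established via explicit recursions on auxiliary words $F_n,G_n,H_n,J_n$. Your sketch contains no substitute for this mechanism, so (C1) as written is unproved.
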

% {\bf{Proof of Lemma 1:}}
\begin{proof}    For $n\geq 1$, we introduce the following words
\[
U_n=W_n 2 W_{n-1} \quad \textrm{ and }\quad V_n=2 W_n.
\]
Note that $W_{n+1}=U_n V_n$. We consider the infinite word 
\[
X_1=U_n V_n V_n \cdots V_n \cdots=U_n(V_n)^{\infty}.
\]
 Now let us consider the element $X_2\in\Q((T^{-1}))$ such that $\Phi(X_1)=X_2$. Indeed, we can write
\[
X_2=\frac{\phi(U_n)}{T^{|U_n|}}+\frac{\phi(V_n)}{T^{|U_n|+|V_n|}}+\frac{\phi(V_n)}{T^{|U_n|+2|V_n|}}+\cdots.
\]
Consequently $X_2\in \Q(T)$ and we have
\[
X_2=\frac{\phi(U_n)}{T^{|U_n|}}+\frac{\phi(V_n)}{T^{|U_n|+|V_n|}}.\frac{T^{|V_n|}}{T^{|V_n|}-1}=\frac{(T^{|V_n|}-1)\phi(U_n)+\phi(V_n)}{T^{|U_n|}(T^{|V_n|}-1)}.
\]
We set $N_2=(T^{|V_n|}-1)\phi(U_n)+\phi(V_n)$ and $D_2=T^{|U_n|}(T^{|V_n|}-1)$ so that we have $X_2=N_2/D_2$. In order to simplify the rational $X_2$, we need to introduce the following relation on finite words. For two finite words $A$ and $B$ , we write $A\bullet B$ if and only if the letters ending $A$ and $B$ respectively are different. Note that $A\bullet B$ if and only if $T$ does not divide $\phi(A)-\phi(B)$. For $n=1$, we have $U_1=12$, $V_1=21$ and  $U_1\bullet V_1$. For $n=2$ we can write
\begin{align*}
U_2&=W_22W_1=G_22W_1\\  \textrm{and }  V_2&=2W_2=2W_12W_02W_1=H_22W_1 \textrm{ with } G_2\bullet H_2.
\end{align*}
We set $(F_1,G_1,H_1)=(\epsilon,U_1,V_1)$ and $(F_2,G_2,H_2)=(2W_1,W_2,2W_12)$. So we have $U_1=G_1F_1$, $V_1=H_1F_1$, $U_2=G_2F_2$ and $V_2=H_2F_2$. By induction, we can define three finite words $F_n$, $G_n$ and $H_n$ such that, for $n\geq 1$, 
\[
U_n=G_nF_n \quad V_n=H_nF_n  \quad \textrm{ with }\quad G_n\bullet H_n.
\]
 The sequence $(F_n,G_n,H_n)$ satisfies, for $n\geq 2$, the following formulas:
\[F_n=2W_12W_2\cdots 2W_{n-1},  \quad H_n=2G_{n-1}\quad \textrm{ and }\quad G_n=U_{n-1}H_{n-1}.\]
Now we will compute the length of the words. We have $|U_n|=\ell_n+\ell_{n-1}+1$ and $|V_n|=\ell_n+1$. But we also have $|F_1|=0$ and, for $n\geq 2$, by using a simple induction, we get
\[
|F_n|=n-1+\ell_1+\ell_2+\dots+\ell_{n-1}=(\ell_n+\ell_{n-1}-1)/2.
\]
Consequently we have, for $n\geq 1$, $|U_n|-|F_n|=|G_n|=(\ell_n+\ell_{n-1}+3)/2$.
\newline It is easily verified that for two finite words $A$ and $B$ we have $\phi(AB)=T^{|B|}\phi(A)+\phi(B)$. Therefore, for three finite words $A$, $B$ and $C$ we also have $\phi(AC)-\phi(BC)=T^{|C|}(\phi(A)-\phi(B))$. Now we define $R_n\in \Q[T]$ by $R_n=\phi(G_{n+1})-\phi(G_n)$. Hence, we can write
\[
N_2=T^{|V_n|}\phi(U_n)+\phi(V_n)-\phi(U_n)=\phi(U_nV_n)-\phi(U_n)
\]
and
\[
N_2=\phi(U_nH_nF_n)-\phi(G_nF_n)=T^{|F_n|}(\phi(U_nH_n)-\phi(G_n))=T^{|F_n|}R_n.
\]
Since $|V_n|=\ell_n+1$, we can also write, in agreement with the formula given in the lemma for $S_n$, 
\[
D_2=T^{|F_n|}(T^{|U_n|-|F_n|}(T^{|V_n|}-1))=T^{|F_n|}S_n.
\]
Consequently we have $X_2=R_n/S_n$. Note that we have $N_2(1)=\phi(V_n)(1)\neq 0$ and consequently $R_n(1)\neq 0$. %Since $G_n\bullet H_n$, and therefore $T$ does not divide $\phi(H_n)-\phi(G_n)$, we also have $R_n(0)\neq 0$.
\newline Now we need to study the approximation of $\theta$ by $X_2$. First, for $n\geq 1$, we observe the following equalities
\begin{multline*}
W_{n+2}=W_{n+1}2W_{n}2W_{n+1}=W_{n}2W_{n-1}2W_{n}2W_{n}2W_{n}2W_{n-1}2W_{n}\\=U_nV_n^3V_{n-1}V_{n}.
\end{multline*}
Hence for all $n\geq 1$, the infinite word $W$ begins with $U_nV_n^3V_{n-1}V_{n}$ whereas we have $X_1=U_nV_n^{\infty}$. With our notation, if $M_1$ and $M_2$ are two different words, then we have $|\Phi(M_1)-\Phi(M_2)|=|T|^{-t}$, where $t$ is the first position where $M_1$ and $M_2$ differ. Therefore we have $|\theta-R_n/S_n|=|\Phi(W)-\Phi(X_1)|=|T|^{-t_n}$, if $t_n$ is the rank of the first letter differing in the words $U_nV_n^3V_{n-1}V_{n}$ and $U_nV_n^{\infty}$. From this we deduce 
\[
|\theta-R_n/S_n|=|T|^{-t_n}=|S_n|^{-t_n/\deg(S_n)}=|S_n|^{-\omega_n}.
\]
To compute this value $t_n$, we introduce another relation between finite words. For two finite words $A$ and $B$ , we write $A\bullet \bullet B$ if and only if the letters beginning $A$ and $B$ respectively are different. We define the following sequence $(J_n)_{n\geq 1}$ of finite words:
\[
J_1=\epsilon \quad \textrm{ and }\quad J_n=W_{n-1}2W_{n-2}2\cdots W_12 \quad \textrm{ for }\quad n\geq 2.
\]
Then, by induction, we show that there exists a sequence $(I_n)_{n\geq 1}$ of finite words
\[
I_0=1,\quad I_1=21,\quad I_2=1221, \dots \quad \textrm{ and }\quad I_n\bullet \bullet I_{n-1}\quad \textrm{ for }\quad n\geq 1
\]
such that 
\[
V_{n-1}V_n=2J_nI_n \quad \textrm{and} \quad V_n=2J_nI_{n-1}\quad \textrm{ for }\quad n\geq 1.
\]
Hence we have 
\[
W=U_nV_n^32J_nI_n\cdots \quad \textrm{and} \quad X_1=U_nV_n^32J_nI_{n-1}\cdots
\]
and this implies $t_n=|U_n|+3|V_n|+|J_n|+2$. We observe that, for $n\geq 1$, we have $|J_n|=|F_n|=(\ell_n+\ell_{n-1}-1)/2$. Consequently we get 
\[
t_n=\ell_n+\ell_{n-1}+1+3(\ell_n+1)+(\ell_n+\ell_{n-1}-1)/2+2=(9\ell_n+3\ell_{n-1}+11)/2.
\]
Since $\deg(S_n)=(\ell_n+\ell_{n-1}+3)/2+\ell_n+1=(3\ell_n+\ell_{n-1}+5)/2$, we obtain 
\[
\omega_n=t_n/\deg(S_n)=3-4/(3\ell_n+\ell_{n-1}+5),
\]
as stated in the lemma.\qed
\end{proof}

\begin{lemma} For $n\geq 1$, we set $S'_n=T^{3\ell_{n}+\ell_{n-1}+4}-1$. Then, for $n\geq 1$, there exists $R'_n\in \Q[T]$ such that
\[
|\theta-R'_n/S'_n|=|S'_n|^{-\omega'_n}\quad \textrm{ with } \quad \omega'_n=2+(\ell_n+\ell_{n-1}+1)/(6\ell_{n}+2\ell_{n-1}+8).
\]
\end{lemma}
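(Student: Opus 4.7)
The plan is to mirror the strategy of Lemma 1 by producing a purely periodic rational approximation to $\theta$ with denominator $S'_n$. First I would set $Z_n=W_{n+1}\,2\,W_n\,2$, which has length $\ell_{n+1}+\ell_n+2=3\ell_n+\ell_{n-1}+4=\deg(S'_n)$, and put $R'_n=\phi(Z_n)\in\Q[T]$. Since the word $X'_1:=(Z_n)^{\infty}$ is purely periodic, one has $\Phi(X'_1)=\phi(Z_n)/(T^{|Z_n|}-1)=R'_n/S'_n$. As in the proof of Lemma 1, $|\theta-R'_n/S'_n|=|T|^{-t'_n}$ where $t'_n$ is the position at which the infinite words $W$ and $X'_1$ first disagree, so the problem reduces to pinpointing $t'_n$.

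The key structural tool is a peeling claim: for every $k\geq 1$, the word $J_{k+1}:=W_k\,2\,W_{k-1}\,2\cdots 2\,W_1\,2$, of length $L_{k+1}=(3\ell_k+\ell_{k-1}+1)/2$, is a prefix of $W_{k+1}$. I would prove it by induction on $k$ using the identity $J_{k+1}=W_k\,2\,W_{k-1}\,2\,J_{k-1}$: writing $W_{k+1}=W_k\,2\,W_{k-1}\,2\,W_k$, it suffices that $J_{k-1}$ be a prefix of the second $W_k$, which holds because $W_k$ starts with $W_{k-1}$ and $W_{k-1}$ starts with $J_{k-1}$ by the inductive hypothesis at level $k-2$. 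Equipped with this, I would check that both $W$ and $X'_1$ admit the common prefix
\[
M_n \;=\; W_{n+2}\,2\,W_n\,2\,W_{n-1}\,2\cdots 2\,W_1\,2 \;=\; W_{n+2}\,2\,J_{n+1}.
\]
For $W$, this follows from $W$ starting with $W_{n+3}=W_{n+2}\,2\,W_{n+1}\,2\,W_{n+2}$ together with the peeling claim at level $n$. For $X'_1$, one uses the identity $Z_n^2=W_{n+2}\,2\,W_n\,2$ (immediate from $W_{n+2}=W_{n+1}\,2\,W_n\,2\,W_{n+1}$) together with the fact that $J_n$ is a prefix of $W_n$, which is itself a prefix of the $W_{n+1}$ starting the third copy of $Z_n$.

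To show that $t'_n=|M_n|+1$, I would compare the characters at that position. In $W$ it is the $(L_{n+1}+1)$-st character of the embedded $W_{n+1}$, that is the character $c_{n+1}$ appearing immediately after the peeling prefix $J_{n+1}$ of $W_{n+1}$. A short positional computation places the corresponding character of $X'_1$ at the $(L_n+1)$-st character of the first $W_n$ inside the third copy of $Z_n$, namely $c_n$. Decomposing $W_k=W_{k-1}\,2\,W_{k-2}\,2\,W_{k-1}$, the first $L_k$ characters exhaust $W_{k-1}\,2\,W_{k-2}\,2$ and then the first $L_{k-2}$ characters of the second $W_{k-1}$, yielding the recursion $c_k=c_{k-2}$ for $k\geq 3$. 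Together with the base values $c_1=1$ and $c_2=2$, this forces $c_k\in\{1,2\}$ with value depending only on the parity of $k$, so $c_n\neq c_{n+1}$ for every $n\geq 1$.

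The conclusion is then a straightforward computation: using $L_{n+1}=(3\ell_n+\ell_{n-1}+1)/2$ and $\ell_{n+2}=5\ell_n+2\ell_{n-1}+6$, one obtains $|M_n|=\ell_{n+2}+L_{n+1}+1=(13\ell_n+5\ell_{n-1}+15)/2$, whence $t'_n=(13\ell_n+5\ell_{n-1}+17)/2$ and
\[
\omega'_n \;=\; \frac{t'_n}{\deg(S'_n)} \;=\; \frac{13\ell_n+5\ell_{n-1}+17}{6\ell_n+2\ell_{n-1}+8} \;=\; 2+\frac{\ell_n+\ell_{n-1}+1}{6\ell_n+2\ell_{n-1}+8},
\]
as claimed. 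The main obstacle will be the bookkeeping in the peeling and matching arguments: simultaneously tracking the position inside the nested $W_k$-structure of $W$ and inside the periodic structure of $X'_1$, with careful attention to the small base cases.
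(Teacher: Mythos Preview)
Your proof is correct and follows essentially the same approach as the paper: your block $Z_n$ coincides with the paper's $U'_n$, your choice $R'_n=\phi(Z_n)$ and the periodic word $X'_1=(Z_n)^\infty$ are identical to the paper's, and your common prefix $M_n=W_{n+2}\,2\,J_{n+1}$ is exactly the paper's $(U'_n)^2 J_n$ (use $Z_n^2=W_{n+2}\,2\,W_n\,2$ and $J_{n+1}=W_n\,2\,J_n$), leading to the same $t'_n$. The one notable difference is that where the paper merely asserts ``by induction'' the existence of words $A_n,B_n$ with $W_{n-1}2W_n2W_{n+2}=J_nA_n$, $U'_n=J_nB_n$ and $A_n\bullet\bullet B_n$, you actually supply the combinatorics: the peeling claim that $J_{k+1}$ is a prefix of $W_{k+1}$ together with the recursion $c_k=c_{k-2}$ and the base values $c_1=1$, $c_2=2$ gives a complete and transparent justification of the disagreement position.
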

%    {\bf{Proof of Lemma 2:}}
\begin{proof}    For $n\geq 1$, we introduce the following word
\[
U'_n=U_{n+1} 2=W_{n+1} 2 W_{n} 2.
\]
We consider the infinite word 
\[
X_3=U'_n U'_n \cdots U'_n \cdots=(U'_n)^{\infty}.
\]
Let us consider now the element $X_4\in\Q((T^{-1}))$ such that $\Phi(X_3)=X_4$. Indeed, we can write
\[
X_4=\frac{\phi(U'_n)}{T^{|U'_n|}}+\frac{\phi(U'_n)}{T^{2|U'_n|}}+\frac{\phi(U'_n)}{T^{3|U'_n|}}+\cdots.
\]
Consequently $X_4\in \Q(T)$ and we have
\[
X_4=\frac{\phi(U'_n)}{T^{|U'_n|}}.\frac{T^{|U'_n|}}{T^{|U'_n|}-1}=\frac{\phi(U'_n)}{T^{|U'_n|}-1}.
\]
We have $|U'_n|=|U_{n+1}|+1=\ell_{n+1}+\ell_n+2=3\ell_{n}+\ell_{n-1}+4$. Hence $S'_n=T^{|U'_n|}-1$ and we set $R'_n=\phi(U'_n)$. Note that $R'_n(0)\neq 0$ and also $R'_n(1)\neq 0$. 
\newline We shall now study the approximation of $\theta$ by $X_4=R'_n/S'_n$. First, for $n\geq 1$, we observe the following equalities
\[
W_{n+3}=W_{n+2}2W_{n+1}2W_{n+2}=W_{n+1}2W_{n}2W_{n+1}2W_{n+1}2W_{n+2},
\]
\[
W_{n+3}=U'_nW_{n+1}2W_{n+1}2W_{n+2}=(U'_n)^2W_{n-1}2W_n2W_{n+2}.
\]
Hence for all $n\geq 1$, the infinite word $W$ begins with $(U'_n)^2W_{n-1}2W_n2W_{n+2}$ whereas we have $X_3=(U'_n)^{\infty}$.  Therefore we have $|\theta-R'_n/S'_n|=|\Phi(W)-\Phi(X_3)|=|T|^{-t'_n}$, if $t'_n$ is the rank of the first letter differing in the words $(U'_n)^2W_{n-1}2W_n2W_{n+2}$ and $(U'_n)^{\infty}$. This will imply
\[
|\theta-R'_n/S'_n|=|S'_n|^{-\omega'_n}\quad \textrm{ where}\quad \omega'_n=t'_n/\deg(S'_n).
\]
To compute $t'_n$, we have to compare the finite words $W_{n-1}2W_n2W_{n+2}$ and $U'_n$. For $n\geq 1$, let us consider the word $J_n$, introduced in the proof of Lemma 1. For $n\geq 1$,  by induction, we prove that there are two finite words $A_n$ and $B_n$ such that we have
\[
W_{n-1}2W_n2W_{n+2}=J_nA_n,\quad U'_n=J_nB_n\quad \textrm{ and}\quad  A_n\bullet \bullet B_{n}.
\]
Hence we have 
\[
W=(U'_n)^2J_nA_n\cdots \quad \textrm{and} \quad X_3=(U'_n)^2J_nB_n\cdots
\]
and this implies $u'_n=2|U'_n|+|J_n|$. We know that $|U'_n|=\deg(S'_n)=3\ell_{n}+\ell_{n-1}+4$. Since, for $n\geq 1$, we have $|J_n|=(\ell_n+\ell_{n-1}-1)/2$, we get 
\[
\omega'_n=t'_n/\deg(S'_n)=2+(\ell_n+\ell_{n-1}-1)/(2\deg(S'_n))\textrm{ for}\quad n\geq 1.
\]
This gives us the desired value for $\omega'_n$ stated in the lemma.\qed
\end{proof}
\par Before proving Theorem 3, we need one last lemma to establish the irreducibility of the rational functions $R_n/S_n$ and $R'_n/S'_n$.
\begin{lemma}
Let $R_n$, $S_n$, $R'_n$ and $S'_n$ be the polynomials defined in the two preceding lemmas. For $n\geq 1$, we have the equality
\[
R_nS'_n-R'_nS_n=(-1)^n(T-1).
\] 
Consequently, for $n\geq 1$, we have  
\[
\gcd(R_n,S_n)=\gcd(R'_n,S'_n)=1.
\] 
\end{lemma}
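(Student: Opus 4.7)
The plan is to prove the polynomial identity $R_nS'_n - R'_nS_n = (-1)^n(T-1)$ by showing that the left-hand side is a polynomial of degree exactly $1$, vanishes at $T = 1$, and has leading coefficient $(-1)^n$; the coprimality assertions are then an immediate corollary, using the non-vanishings $R_n(1)\neq 0$ and $R'_n(1)\neq 0$ already recorded in the proofs of Lemmas 1 and 2.

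First, I would read off the degree from the approximation estimates of the two preceding lemmas. A direct comparison of the closed forms $t_n = (9\ell_n+3\ell_{n-1}+11)/2$ and $t'_n = 2|U'_n|+|J_n|$ yields $t'_n - t_n = 2\ell_n+\ell_{n-1}+2 > 0$, so by the ultrametric inequality on $\Q((T^{-1}))$,
\[
|R_n/S_n - R'_n/S'_n| = |\theta - R_n/S_n| = |T|^{-t_n}.
\]
Multiplying through by $|S_nS'_n| = |T|^{\deg S_n + \deg S'_n}$ and using $\deg S_n = (3\ell_n+\ell_{n-1}+5)/2$, $\deg S'_n = 3\ell_n+\ell_{n-1}+4$, the exponent collapses to $1$, so $R_nS'_n - R'_nS_n$ is a polynomial of degree exactly $1$. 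Vanishing at $T=1$ is immediate since both $S_n$ and $S'_n$ vanish there, so the polynomial equals $c(T-1)$ for some non-zero $c\in\Q$.

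The real content is the computation of the leading coefficient $c$. Since $S_n$ and $S'_n$ are monic, $c$ equals the coefficient of $T^{-t_n}$ in the Laurent expansion of $R_n/S_n - R'_n/S'_n = \Phi(U_nV_n^\infty) - \Phi((U'_n)^\infty)$. By Lemma 1, $U_nV_n^\infty$ and $W$ agree through position $t_n - 1$ and first disagree at $t_n$, where the letter of $U_nV_n^\infty$ is the first letter of $I_{n-1}$; by Lemma 2, $(U'_n)^\infty$ agrees with $W$ through position $t'_n - 1 \geq t_n$, so $(U'_n)^\infty[t_n] = W[t_n]$, which in the notation of Lemma 1 is the first letter of $I_n$. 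Thus $c = I_{n-1}[1] - I_n[1]$, and combining $I_0 = 1$ with the relations $I_k\bullet \bullet I_{k-1}$ over the alphabet $\{1,2\}$ gives by an immediate induction that $I_k$ begins with $1$ when $k$ is even and with $2$ when $k$ is odd, so $c = (-1)^n$.

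The delicate point is aligning the two periodic words with $W$ at position $t_n$, which is exactly where the estimate $t'_n > t_n$ from Lemma 2 is essential; without it one could not identify $(U'_n)^\infty[t_n]$ with $W[t_n]$. Once the polynomial identity is in hand, the coprimality statements are automatic: a common factor $D$ of $R_n$ and $S_n$ must divide $(-1)^n(T-1)$, hence $D\in\{1,T-1\}$, and $D=T-1$ is excluded by $R_n(1)\neq 0$ (noted at the end of the proof of Lemma 1). The same reasoning with $R'_n(1)\neq 0$ from the proof of Lemma 2 disposes of $\gcd(R'_n,S'_n)$.
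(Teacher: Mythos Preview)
Your argument is correct and takes a genuinely different route from the paper. The paper establishes recurrence relations
\[
S'_{n+1}=P_nS_{n+1}+S'_n,\quad S_{n+1}=Q_nS'_n-S_n,\quad R'_{n+1}=P_nR_{n+1}+R'_n,\quad R_{n+1}=Q_nR'_n-R_n
\]
for explicit polynomials $P_n,Q_n$, mimicking the convergent recursion of a continued fraction; these force $\Delta_{n+1}=-\Delta_n$, and a direct computation of $\Delta_1=1-T$ finishes it. Your approach instead reads off $\deg(R_nS'_n-R'_nS_n)=1$ straight from the ultrametric identity $|R_n/S_n-R'_n/S'_n|=|T|^{-t_n}$ (using $t'_n>t_n$), gets the root at $T=1$ for free from $S_n(1)=S'_n(1)=0$, and pins down the leading coefficient combinatorially by comparing the $t_n$-th letters of $U_nV_n^\infty$ and $(U'_n)^\infty$ via $W$ and the alternation $I_k\bullet\bullet I_{k-1}$ from Lemma~1.

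Your proof is shorter and more conceptual: it extracts everything from the approximation statements already proved, with no new polynomial identities. The paper's approach, on the other hand, produces the recurrences (1)--(4) as a by-product; these are of independent interest since they foreshadow the continued-fraction structure exploited in the proof of Theorem~3 (the pairs $(R_n,S_n)$ and $(R'_n,S'_n)$ behave like consecutive convergents). One small remark: you quote $t'_n=2|U'_n|+|J_n|$ from the proof of Lemma~2, but the correct value consistent with the statement of that lemma is $t'_n=2|U'_n|+|J_n|+1$; either way $t'_n-t_n\geq 2\ell_n+\ell_{n-1}+2>0$, so your inequality $t'_n>t_n$ and hence the whole argument are unaffected.
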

%    {\bf{Proof of Lemma 3:}}
\begin{proof}    We shall use the finite words $U_n$, $V_n$, $F_n$, $G_n$, $H_n$ and $U'_n$, introduced in the previous lemmas. For $n\geq 1$, let us introduce in $\Q[T]$ the polynomials:
\[
P_n=T^{|F_{n+1}|+1}(T^{|V_{n+1}|}+1) \quad \textrm{ and}\quad Q_n=T^{|G_n|}.
\]
Note that we have $|F_{n+1}|+1=(3\ell_n+\ell_{n-1}+3)/2$, $|V_{n+1}|=\ell_{n+1}+1$ and  $|G_n|=(\ell_n+\ell_{n-1}+3)/2$. Consequently, using the recurrence relation on $\ell_n$ and the definitions of $S_n$, $S'_n$, $S_{n+1}$ and $S'_{n+1}$, a direct and elementary computation shows that, for $n\geq 1$, we have
\begin{align}
  S'_{n+1}& =P_nS_{n+1}+S'_n,\label{eq1}\\
  S_{n+1}&=Q_nS'_n-S_n\label{eq2}.
\end{align}
Now we shall see that, for $n\geq 1$, the very same equalities as above hold, if $S$ is replaced by $R$. Indeed, for $n\geq 1$, we have
\begin{align*}
R_{n+1} &=\phi(G_{n+2})-\phi(G_{n+1})=\phi(U_{n+1}H_{n+1})-\phi(G_{n+1})\\ 
&=T^{|H_{n+1}|}\phi(U_{n+1})+\phi(H_{n+1})-\phi(G_{n+1})\\
&= T^{|H_{n+1}|}\phi(U_{n+1})+\phi(2G_{n})-\phi(G_{n+1})\\
&=T^{|H_{n+1}|}\phi(U_{n+1})+2T^{|G_n|}+\phi(G_{n})-\phi(G_{n+1})\\
&=T^{|G_n|}(T^{|H_{n+1}|-|G_n|}\phi(U_{n+1})+2)-R_n\\
&=T^{|G_n|}(T\phi(U_{n+1})+2)-R_n\\
&=T^{|G_n|}\phi(U'_{n})-R_n=Q_nR'_n-R_n, 
\end{align*}
since $H_{n+1}=2G_n$ implies $|H_{n+1}|-|G_n|=1$. To establish the next formula, we will use the following observation: for $n\geq 2$ we have
\[
U_n=W_n2W_{n-1}=W_{n-1}2W_{n-2}2W_{n-1}2W_{n-1}=U_{n-1}V_{n-1}V_{n-1}.
\]
Then, we also have
\begin{align*}
R'_{n+1}-R'_n &=\phi(U'_{n+2})-\phi(U'_{n+1})=T\phi(U_{n+2})+2-(T\phi(U_{n+1})+2)\\ 
&=T(\phi(U_{n+2})-\phi(U_{n+1}))=T (\phi(U_{n+1}V_{n+1}V_{n+1}) - \phi(U_{n+1}))\\
&=T(T^{2|V_{n+1}|}\phi(U_{n+1})+\phi(V_{n+1}V_{n+1})-\phi(U_{n+1}))\\
&=T(T^{2|V_{n+1}|}\phi(U_{n+1})+T^{|V_{n+1}|}\phi(V_{n+1})+\phi(V_{n+1})-\phi(U_{n+1}))\\
&=T((T^{2|V_{n+1}|}-1)\phi(U_{n+1})+(T^{|V_{n+1}|}+1)\phi(V_{n+1}))\\
&=T(T^{|V_{n+1}|}+1)((T^{|V_{n+1}|}-1)\phi(U_{n+1})+\phi(V_{n+1}))\\
&=T(T^{|V_{n+1}|}+1)(T^{|F_{n+1}|}R_{n+1})=P_nR_{n+1}.
\end{align*}
Consequently, we have obtained
\begin{align}
  R'_{n+1}&=P_nR_{n+1}+R'_n, \label{eq3} \\
  R_{n+1}&=Q_nR'_n-R_n. \label{eq4}
\end{align}
 Combining  \eqref{eq1}, \eqref{eq2}, \eqref{eq3} and \eqref{eq4}, we have
\begin{equation}\label{eq5}
P_n= (S'_{n+1}-S'_n)/S_{n+1}=(R'_{n+1}-R'_n)/R_{n+1}%\eqno{(5)}
\end{equation}
and
\begin{equation}\label{eq6}
Q_n= (S_{n+1}+S_n)/S'_n=(R_{n+1}+R_n)/R'_n.%\eqno{(6)}
\end{equation}
For $n\geq 1$, we set $\Delta_n=R_nS'_n-S_nR'_n$. From \eqref{eq5}, we get
\begin{equation}\label{eq7}
\Delta_{n+1}=R_{n+1}S'_n-S_{n+1}R'_n%\eqno{(7)}
\end{equation}
and, in the same way, we have from \eqref{eq6}
\begin{equation}\label{eq8}
\Delta_n=S_{n+1}R'_n-R_{n+1}S'_n.%\eqno{(8)}
\end{equation}
From \eqref{eq7} and \eqref{eq8}, we obtain
\[
\Delta_{n+1}=-\Delta_n \quad \textrm{ and}\quad \Delta_n=(-1)^{n+1}\Delta_1\quad \textrm{for}\quad n\geq 1.
\]
Finally, for $n=1$, we have
\[
R_1=T^3+2T^2+T-1 \quad \textrm{ and}\quad R'_1=T^6+2T^5+2T^4+T^3+2T^2+T+2,
\]
but also
\[
S_1=T^2(T^2-1)\quad \textrm{ and}\quad S'_1=T^7-1.
\]
From this we get directly $\Delta_1=-T+1$ and therefore $\Delta_n=(-1)^{n}(T-1)$, for $n\geq 1$, as desired. The statement on the greatest common divisor follows, since $R_n(1)\neq 0$ and $R'_n(1)\neq 0$.\qed % So the proof of this lemma is complete.
\end{proof}
\vskip 1 cm
{\bf{Proof of Theorem 3:}} We first observe that, due to the very good rational approximations for $\theta$ stated in Lemma 2 and Lemma 3, it is clear that $\theta$ is irrational. Let us start by recalling classical results concerning the continued fraction algorithm in the formal case. Let $\alpha$ be an irrational element in $\F(K)$ and $\alpha=[a_0,a_1,\dots,a_n,\dots]$ its continued fraction expansion. Let $(x_n/y_n)_{n\geq 0}$ be the sequence of convergents to $\alpha$. The polynomials $x_n$ and $y_n$ are defined recursively by the same relation $z_n=a_nz_{n-1}+z_{n-2}$, with initial conditions  $(x_0,x_1)=(a_0,a_0a_1+1)$ and $(y_0,y_1)=(1,a_1)$. We have $\gcd(x_n,y_n)=1$ and $\deg(y_n)=\deg(a_1)+\dots+\deg(a_n)$, for $n\geq 1$. Moreover, we have the following equality
\[
|\alpha-x_n/y_n|=|y_n|^{-2}|a_{n+1}|^{-1}=|y_n|^{-2-\deg(a_{n+1})/\deg(y_n)} \quad \textrm{ for } \quad n\geq 0.
\] 
Finally if $x$, $y$ are in $K[T]$, with $y\neq 0$ and $|\alpha-x/y|<|y|^{-2}$ then $x/y$ is a convergent of $\alpha$.
\newline From the approximations given in Lemma 1 and Lemma 2, since we clearly have $\omega_n >2$ and $\omega'_n >2$ for $n\geq 1$, we can conclude that the rational functions $R_n/S_n$ and $R'_n/S'_n$ are convergent to $\theta$. Hence, for $n\geq 1$, there are three integers $N(n)$, $M(n)$ and $L(n)$ such that we have
\[
R_n/S_n=x_{N(n)}/y_{N(n)}, \quad R'_n/S'_n=x_{M(n)}/y_{M(n)}
\]
and
\[
  R_{n+1}/S_{n+1}=x_{L(n)}/y_{L(n)}.
\]
Since the rational functions $R_n/S_n$ and $R'_n/S'_n$ are in their lowest terms, we have $|S_n|=|y_{N(n)}|$ and $|S'_n|=|y_{M(n)}|$. On the other hand if we set $D_n=\deg(S_n)$ and $D'_n=\deg(S'_n)$, we observe the following
\[
D_n=(3\ell_n+\ell_{n-1}+5)/2 < D'_n=3\ell_n+\ell_{n-1}+4 <D_{n+1} \quad \textrm{ for } \quad n\geq 1.
\]
Consequently, for $n\geq 1$, we have $N(n)<M(n)<L(n)$. Since we have $\deg(y_n)=\sum_{1\leq i\leq n}\deg(a_i)$, we obtain 
\[
D'_n=D_n+d_{N(n)+1}+\dots+d_{M(n)},
\]
and
\[
D_{n+1}=D'_n+d_{M(n)+1}+\dots+d_{L(n)}.
\]
Due to the rational approximation of the convergents, for $n\geq 1$, we  also have
\[
\omega_n=2+d_{N(n)+1}/D_n \quad \textrm{ and } \quad \omega'_n=2+d_{M(n)+1}/D'_n.
\]
From these two equalities, with the values for $\omega_n $ and $\omega'_n$ given in the lemmas, for $n\geq 1$, we get
 \[
d_{N(n)+1}=(3\ell_n+\ell_{n-1}+1)/2 \quad \textrm{ and } \quad d_{M(n)+1}=(\ell_n+\ell_{n-1}+1)/2.
\]
At last, a straightforward computation shows that
\[
D'_n-D_n=d_{N(n)+1}+1\quad \textrm{ and } \quad D_{n+1}-D'_n=d_{M(n)+1}+1.
\]
Since $d_i\geq 1$ for $i\geq 1$, comparing with the above formulas, we conclude that $M(n)=N(n)+2$ and $d_{M(n)}=1$, but also $L(n)=M(n)+2$ and $d_{L(n)}=1$. We observe that $R_1/S_1=(T^3+2T^2+T-1)/(T^4-T^2)=[0,a_1,a_2,a_3,a_4]$. Consequently, we have $N(1)=4$ and by induction, $N(n)=4n$, $M(n)=4n+2$ and $L(n)=4n+4$, for $n\geq 1$. So the proof of the theorem is complete.

\vskip 1 cm
\begin{remark} The reader will observe that the choice of the pair $(1,2)$ in the definition of the infinite word W is arbitrary. Clearly, the structure of $W$ only depends on a pair of symbols $(a,b)$ with $a\neq b$. To introduce the generating function associated to $W$, we could have taken an arbitrary pair $(a,b)$ in $\Q^2$ with $a \neq b$. However, deciding to keep close to the original sequence, we take $(a,b)=(1,2)$.  The structure of the infinite world for an arbitrary $(a,b)$ implies the existence of the rational approximations given in the first lemmas and the pseudo-coprimality obtained in the last one. Note that, with $(a,b)=(1,-1)$ for instance, we get $S_1=T^2(T^2-1)$ and $R_1=(T^2-2)(T-1)$, and the property of coprimality for the pair $(R_1,S_1)$ fails. This coprimality is necessary to get the regularity of the sequence of the degrees as stated in Theorem 3, and when it fails this brings a slight perturbation in the continued fraction expansion of the corresponding generating function. Of course, we are well aware of the many possible generalizations and extensions, concerning the word $W$ itself and the continued fraction expansion which is derived from it, in the formal case, in characteristic zero and also in positive characteristic, but even in the real case by specializing the indeterminate. However, the aim of this note is simply to describe the continued fraction presented here below, thinking that it has the accidental beauty of some singular mathematical objects.
\end{remark}
\vskip 1 cm

\par Finally, we state a conjecture giving the precise form, and not only the degree, of the partial quotients in the continued fraction expansion for $\theta$. However, in order to avoid long and sophisticated arguments, we have not tried to prove what is stated below. The first partial quotients are given by expanding the convergent $R_1/S_1$. We have $R_1/S_1=[0,a_1,a_2,a_3,a_4]$ and a direct computation gives

\[
a_1=T-2,\quad a_2=T/2+1/4,\quad a_3=8T/5+76/25,\quad a_4=-125T/48+25/24.
\]

For the partial quotients from the fifth on, we have the following conjecture.

\begin{conjecture}
 % \par {\emph{
      Let $(\ell_n)_{n\geq 0}$ be the sequence of integers defined in Theorem 3. For $n\geq 1$, there exists $(\lambda_{1,n},\lambda_{2,n},\lambda_{3,n},\lambda_{4,n})\in \Q^4$ such that
%$$a_{4n+1}=\lambda_{1,n}(T^{(3\ell_n+\ell_{n-1}+3)/2}+T^{(\ell_n+\ell_{n-1}+1)/2}-2)/(T-1),$$ 
%  $$a_{4n+3}=\lambda_{3,n}(T^{(\ell_n+\ell_{n-1}+3)/2}-1)/(T-1)$$ 
\begin{align*}
a_{4n+1}&=\lambda_{1,n}(T^{(3\ell_n+\ell_{n-1}+3)/2}+T^{(\ell_n+\ell_{n-1}+1)/2}-2)/(T-1),\\
a_{4n+3}&=\lambda_{3,n}(T^{(\ell_n+\ell_{n-1}+3)/2}-1)/(T-1)\\
\textrm{and }\;\;
a_{4n+2}&=\lambda_{2,n}(T-1), \qquad a_{4n+4}=\lambda_{4,n}(T-1).
\end{align*}
%  and
%  $$a_{4n+2}=\lambda_{2,n}(T-1), \qquad a_{4n+4}=\lambda_{4,n}(T-1),$$ 
Moreover the sequences $(\lambda_{i,n})_{n\geq 1}$ in $\Q$, for $i=1,2,3$ and $4$, are as follows:
\[
\begin{array}{ll}
\lambda_{1,n}=(-1)^{n+1}r_n^2, & \qquad \lambda_{2,n}=(-1)^{n+1}(r_n^2+r_nr_{n+1})^{-1},\\
%&\\
\lambda_{3,n}=(-1)^{n+1}(r_n+r_{n+1})^2,& \qquad \lambda_{4,n}=(-1)^{n+1}(r_{n+1}^2+r_nr_{n+1})^{-1}
\end{array}
\]
%$$\lambda_{1,n}=(-1)^{n+1}r_n^2,\qquad \lambda_{2,n}=(-1)^{n+1}(r_n^2+r_nr_{n+1})^{-1},$$
%$$\lambda_{3,n}=(-1)^{n+1}(r_n+r_{n+1})^2, \qquad \lambda_{4,n}=(-1)^{n+1}(r_{n+1}^2+r_nr_{n+1})^{-1}$$
where the sequence $(r_n)_{n\geq 1}$ in $\Q$ is defined by
\[
r_n=4(2\ell_n-\ell_{n-1}+1)/25  \quad \textrm{ for }\quad  n\geq 1.
\]
%}}
\end{conjecture}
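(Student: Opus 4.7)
The plan is to prove the conjecture by strong induction on $n \geq 1$. A preparatory reduction simplifies the statement considerably. The conjectured leading coefficients satisfy $\lambda_{1,n}\lambda_{2,n}\lambda_{3,n}\lambda_{4,n} = r_n/r_{n+1}$, so the scalar $C_n := \prod_{i=1}^{4n}\mathrm{LC}(a_i)$ obeys $C_{n+1}/C_n = r_n/r_{n+1}$; with $C_1 = \mathrm{LC}(a_1)\mathrm{LC}(a_2)\mathrm{LC}(a_3)\mathrm{LC}(a_4) = -25/12 = -1/r_1$, this gives $C_n = -1/r_n$. Since the standard continued fraction convergent denominator $y_{4n}$ and the polynomial $S_n$ are proportional (both in lowest terms give the rational $R_n/S_n$), the equalities $y_{4n} = -S_n/r_n$ and $y_{4n+2} = -S'_n/(r_n+r_{n+1})$ follow. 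The conjecture thus reduces to these two explicit convergent formulas together with the polynomial forms of the $a_{4n+k}$. A second useful observation is that $r_n$ satisfies the Pell-like recurrence
\[
  r_{n+1} = 2r_n + r_{n-1}, \qquad r_1 = 12/25, \qquad r_2 = 32/25,
\]
the same recurrence as $\ell_n$ itself, verified by a one-line substitution using $\ell_{n+1} = 2\ell_n + \ell_{n-1}+2$.

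The base case $n=1$ is direct: from $R_1/S_1 = [0,T-2,T/2+1/4,8T/5+76/25,-125T/48+25/24]$ one immediately has $y_4 = -S_1/r_1$ and reads off $y_3$. Computing $R_2, S_2$ via Lemma 3 and applying four steps of polynomial Euclidean division produces $a_5, a_6, a_7, a_8$, which are then checked against the conjectured formulas with $r_1 = 12/25$, $r_2 = 32/25$, $r_3 = 76/25$.

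For the inductive step, assume the conjecture for blocks up through $n-1$, so that $y_{4n-1}$ and $y_{4n} = -S_n/r_n$ are known explicitly. Using the conjectured $a_{4n+1}, a_{4n+2}, a_{4n+3}, a_{4n+4}$, compute $y_{4n+1}, y_{4n+2}, y_{4n+3}, y_{4n+4}$ one step at a time via the continued fraction recurrence, and verify $y_{4n+2} = -S'_n/(r_n+r_{n+1})$ and $y_{4n+4} = -S_{n+1}/r_{n+1}$ by substitution, using Lemma 3's recurrences $S'_{n+1} = P_n S_{n+1} + S'_n$ and $S_{n+1} = Q_n S'_n - S_n$ together with the Pell recurrence for $r_n$. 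To prove the conjecture for block $n+1$, combine the now-known values of $y_{4n+3}, y_{4n+4}, y_{4n+6}$ in the identity
\[
  y_{4n+6} = (a_{4n+6}a_{4n+5}+1)\,y_{4n+4} + a_{4n+6}\,y_{4n+3};
\]
a degree comparison forces $a_{4n+5}$ to be the unique polynomial of degree $(3\ell_{n+1}+\ell_n+1)/2$ whose high-order contribution matches the dominant part of $P_n$ (via $S'_{n+1}$) and whose lower-order remainder is absorbed by the linear $a_{4n+6}$, with all coefficients confirmed by Pell cancellations among $r_{n+1}, r_{n+2}$. A dual analysis using $S_{n+2}=Q_{n+1}S'_{n+1}-S_{n+1}$ then handles $a_{4n+7}, a_{4n+8}$.

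The main obstacle is the bookkeeping. Each of the above verifications reduces to a polynomial identity in $T$ with coefficients in $\Q(r_n,r_{n+1},r_{n+2})$, and every coefficient must be matched. The leverage comes from the Pell recurrence, which produces the necessary cross-identities among the $r_k$, and from the factorization $S'_n = (T-1)(1+T+\cdots+T^{3\ell_n+\ell_{n-1}+3})$, which is what lets the $(T-1)$-factor in $a_{4n+2}$ and $a_{4n+4}$ cancel cleanly. Nothing here is conceptually deep, but, as the authors note, the full calculation is long and sophisticated.
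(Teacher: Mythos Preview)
The paper does \emph{not} prove this statement: it is explicitly left as a conjecture, with the authors remarking that ``in order to avoid long and sophisticated arguments, we have not tried to prove what is stated below.'' So there is no paper proof to compare against; the question is whether your outline would actually constitute a proof.

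Your preparatory reductions are correct and useful. The product $\lambda_{1,n}\lambda_{2,n}\lambda_{3,n}\lambda_{4,n}=r_n/r_{n+1}$, the value $C_1=-25/12=-1/r_1$, the Pell recurrence $r_{n+1}=2r_n+r_{n-1}$, and the consequence $y_{4n}=-S_n/r_n$ (given that the block-$n$ partial quotients have the conjectured leading coefficients) all check. The identity $x_{4n+2}y_{4n}-x_{4n}y_{4n+2}=a_{4n+2}$ together with $\Delta_n=(-1)^n(T-1)$ from Lemma~3 indeed forces $a_{4n+2}=\lambda_{2,n}(T-1)$ once the scalar normalisations of $y_{4n}$ and $y_{4n+2}$ are known, and similarly for $a_{4n+4}$.

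However, the inductive step as written has genuine gaps. First, you only track the denominators $y_k$. Verifying that the conjectured $b_{4n+1},\dots,b_{4n+4}$ send $(y_{4n-1},y_{4n})$ to $\tilde y_{4n+4}=-S_{n+1}/r_{n+1}$ does not by itself show that $[a_0,\dots,a_{4n},b_{4n+1},\dots,b_{4n+4}]=R_{n+1}/S_{n+1}$; you also need $\tilde x_{4n+4}=-R_{n+1}/r_{n+1}$. This is salvageable --- Lemma~3 gives identical recurrences for $(R_n,R'_n)$ and $(S_n,S'_n)$, so the numerator computation is formally the same --- but it must be stated and the base case must match both. Second, the logical structure is tangled: you assume blocks through $n-1$, then ``use the conjectured $a_{4n+1},\dots$'' to compute $\tilde y$'s, and then separately claim ``to prove the conjecture for block $n+1$'' you will use a ``now-known'' $y_{4n+6}$. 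But $y_{4n+6}$ is exactly what block $n+1$ is supposed to establish, so this is circular; and if the first half of the step already proves block $n$, the second half is redundant. Third, the step ``compute $y_{4n+1},\dots$'' requires an explicit closed form for $y_{4n-1}$ (or $y_{4n+1}$, $y_{4n+3}$), which you never write down; without it the verification cannot even be set up. Fourth, the core polynomial identities (that the conjectured $b$'s, acting on $-S_n/r_n$ and the appropriate odd-index $y$, yield $-S'_n/(r_n+r_{n+1})$ and then $-S_{n+1}/r_{n+1}$) are merely asserted, with the phrase ``Pell cancellations'' standing in for what is in fact the entire content of the proof.

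A cleaner organisation is: define the $b_k$ once and for all by the conjectured formulas, and prove by induction on $n$ the four closed forms $\tilde x_{4n}=-R_n/r_n$, $\tilde y_{4n}=-S_n/r_n$, $\tilde x_{4n+2}=-R'_n/(r_n+r_{n+1})$, $\tilde y_{4n+2}=-S'_n/(r_n+r_{n+1})$, using Lemma~3's pair of recurrences and the Pell relation for $r_n$. Then $\tilde x_{4n}/\tilde y_{4n}=R_n/S_n\to\theta$, so $[b_0,b_1,\dots]=\theta$, and uniqueness of the continued fraction expansion gives $a_k=b_k$. The honest work is still the polynomial verification inside the inductive step, which you have not done.
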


%% \vskip 1 cm
%% \begin{tabular}{ll}Bill Allombert and Alain Lasjaunias\\Institut de Math\'ematiques de Bordeaux  CNRS-UMR 5251
%% \\Universit\'e de Bordeaux \\Talence 33405, France \\E-mail: Bill.Allombert@math.u-bordeaux.fr\\
%% \phantom{E-mail: }Alain.Lasjaunias@math.u-bordeaux.fr\\
%% \end{tabular}

%% \medskip

%% \begin{tabular}{ll}Nicolas Brisebarre\\
%% CNRS, Laboratoire LIP (CNRS, ENSL, Inria, UCBL)\\
%% Universit\'e de Lyon\\
%% ENS Lyon, 46 All\'ee d'Italie\\ 
%% 69364 Lyon Cedex 07, France\\
%% E-mail: Nicolas.Brisebarre@ens-lyon.fr\\
%% \end{tabular}

\end{document}